\theoremstyle{thmstyleone}%
\newtheorem{theorem}{Theorem}
\newtheorem{proposition}[theorem]{Proposition}%
\newtheorem{corollary}[theorem]{Corollary}
\newtheorem{lemma}[theorem]{Lemma}
\theoremstyle{thmstyletwo}%
\theoremstyle{thmstylethree}%
\newtheorem{dfn}{Definition}%
\begin{document}

\title[Article Title]{Novel definition and quantitative analysis of branch structure with topological data analysis}


\author*[1]{\fnm{Haruhisa} \sur{Oda}}\email{haruhisa-oda0722@g.ecc.u-tokyo.ac.jp}
\author[2]{\fnm{Mayuko} \sur{Kida}}
\author[3,4]{\fnm{Yoichi} \sur{Nakata}}
\author[2]{\fnm{Hiroki} \sur{Kurihara}}

\affil*[1]{\orgdiv{Faculty of Medicine}, \orgname{The University of Tokyo}, \orgaddress{\street{Hongo}, \city{Bunkyo-ku}, \postcode{1130033}, \state{Tokyo}, \country{Japan}}}
\affil[2]{\orgdiv{Department of Physiological Chemistry and Metabolism, Graduate School of Medicine}, \orgname{The University of Tokyo}, \orgaddress{\street{Hongo}, \city{Bunkyo-ku}, \postcode{1130033}, \state{Tokyo}, \country{Japan}}}
\affil[3]{\orgdiv{Isotope Science Center}, \orgname{The University of Tokyo}, \orgaddress{\street{Yayoi}, \city{Bunkyo-ku}, \postcode{1130032}, \state{Tokyo}, \country{Japan}}}
\affil[4]{\orgdiv{Arithmer Inc.}, \orgname{R\&D Headquarters}, \orgaddress{\street{Hongo}, \city{Bunkyo-ku}, \postcode{1130033}, \state{Tokyo}, \country{Japan}}}


\abstract{While branching network structures abound in nature, their objective analysis is more difficult than expected because existing quantitative methods often rely on the subjective judgment of branch structures. This problem is particularly pronounced when dealing with images comprising discrete particles. Here we propose an objective framework for quantitative analysis of branching networks by introducing the mathematical definitions for internal and external structures based on topological data analysis, specifically, persistent homology. We compare persistence diagrams constructed from images with and without plots on the convex hull. The unchanged points in the two diagrams are the internal structures and the difference between the two diagrams is the external structures. We construct a mathematical theory for our method and show that the internal structures have a monotonicity relationship with respect to the plots on the convex hull, while the external structures do not. This is the phenomenon related to the resolution of the image. Our method can be applied to a wide range of branch structures in biology, enabling objective analysis of numbers, spatial distributions, sizes, and more. Additionally, our method has the potential to be combined with other tools in topological data analysis, such as the generalized persistence landscape.}

\keywords{Persistent homology, Persistence landscape, Topological data analysis, Branch structures}



\maketitle
\section*{Acknowledgements}
This work was supported in part by JSPS KAKENHI Grant Numbers 19H01048 and 22H04991 (to H.K.).

\section{Introduction}
Branching network patterns are pervasive in nature, exhibiting their intricate forms in diverse contexts. While a range of quantitative approaches has been applied to study the branching network properties\cite{intro1,intro2}, the recognition of branching structures has often relied on subjective and intuitive judgments. This challenge is particularly pronounced when dealing with original images comprising discrete and punctate elements, as exemplified by microscopic images of cellular nuclei composing blood and lymphatic vessels. The absence of an objective framework for definitively discerning and quantifying branch structures has hindered the advancement of rigorous numerical analyses. In the present study, we give an objective framework that provides a mathematical definition for branch structures based on topological data analysis (TDA), particularly, persistent homology. We further develop a method to extract information from branching structures in a way that can be used for quantitative analysis.

Here, we should mention the term “branch”
, which is often used for multiple meanings in biological settings.
In a broad sense, it refers to both the protruding segments and the internal network structures. In a narrow sense, it only means the protruding segments. In this manuscript, we will define the internal and external structures that basically extract information on the internal network structures and the protruding segments respectively.
Mathematically formal definitions of these structures will appear in the Theory section.

Plotting points on the boundary of the convex hull of the image plays an important role in this method.
(In this paper, in order to avoid writing `` the boundary of the convex hull" many times, we will just say ``the convex hull".)
These points can be regarded as parameters of this method. In the ``Theory'' section, we will mathematically explore how external and internal structures behave under these parameters. We will see that internal structures have some monotonicity relationship with respect to points on the convex hull, while external structures do not. We give some explanations for these results. In the ``Results'' section, we will apply our method to real images of lymph vessels to see how it works. We will show some possible ways of quantitative analysis including the use of persistence landscape. We will also deal with the images of neurons and blood vessels to see that our method can be applied to a wide range of branch structures that appear in biology. In the ``Discussion'' section, we compare our method with other methods dealing with some branch-like structures \cite{placentaarea,imageJskeleton}. We will see that these methods do not directly give an answer to the problem of what branch is. Also, for future applications to a broader range of image settings, we generalize persistence landscape\cite{bubenik1,bubenik2} using 2 parameters and show that there are some differentiability structures with respect to the 2 parameters.
The calculation of persistent homology is conducted using HomCloud (3.4.1)\cite{obayashi,obayashi2}. The calculation of the persistence landscape was conducted using the program code by Katherine Benjamin available at https://gitlab.com/kfbenjamin/pysistence-landscapes based on Bubenik(2017)\cite{bubenik4}. The programs are written in Python (version 3.7.9 and version 3.10.9). Note that image processing before the persistent homological analysis is not the focus of this paper. Therefore, we use binarized images which have already gone through several image processing steps including manual noise reduction as our data.

\section{Background}
Here, we explain briefly the concepts of persistent homology and persistence landscape. We only consider homology with coefficients in a field $\mathbb{K}$. Suppose that we are given a point cloud $X$. From $X$, we create some simplicial complex. In this paper, we use the \v{C}ech (alpha) complex. Let $C(r)$ be the \v{C}ech complex at the radius $r$. Suppose that $C(r)$ changes at
the radii
$r_1<\cdots<r_i<\cdots$. We define $C^i$ to be $C(r_i)$. We denote by $C^i_n$ the n-chains of $C^i$. From the definition of the \v{C}ech complex, we have $C^i_n\subset C^{i+1}_n$ for all $i,n$. We get the following commutative diagram with the horizontal arrows being the boundary maps ($\partial$) and the vertical arrows being the inclusion maps ($\iota$) induced by the above inclusion relationship.
\[
\begin{CD}
@. @VVV @VVV @VVV @. \\
@>>> C^{i}_{n+1} @>{\partial}>> C^{i}_{n} @>{\partial}>> C^{i}_{n-1} @>>> \\
@. @V{\iota}VV @V{\iota}VV @V{\iota}VV @. \\
@>>> C^{i+1}_{n+1} @>{\partial}>> C^{i+1}_{n} @>{\partial}>> C^{i+1}_{n-1} @>>> \\
@. @VVV @VVV @VVV @. \\
\end{CD}
\]
From the horizontal part, we can define
the n-th homology
$H_n(C^i)$. We define $H_n(C)$ to be the direct sum $\displaystyle\bigoplus_i H_n(C^i)$. We introduce a grading to $H_n(C)$ by defining $x\cdot: H_n(C^i)\rightarrow H_n(C^{i+1})$ as $x\cdot m:=\iota_*(m)$ for $m\in H_n(C^i)$. Here, $\iota_*$ is a homomorphism between homology groups induced by inclusion maps. This grading makes $H_n(C)$ into a $\mathbb{K}[x]$-graded module. We decompose this module by structure theorem\cite{zomorodian1}. The result of this decomposition is visualized by barcodes\cite{collins1,ghrist1}, the collection of bars. The left ends of the bars are called the births and the right ends of the bars are called the deaths. Persistence diagram\cite{cohen-steiner1} is a 2-dimensional representation of a barcode. For each bar in a barcode, we plot a point whose $x$-coordinate is the birth and whose $y$-coordinate is the death of the bar. We regard the barcode as the set of intervals $\{I_j\}$. Persistence landscape is the set of functions $\{\lambda_k(t)\}_{k=1,2,\ldots}$ defined as $\lambda_k(t):=\sup(u\ge 0| \#\{j|[t-u,t+u]\subset I_j\}\ge k)$.

\section{Method}
First, we explain the problem posed in the Introduction in more detail. In Frye et al. (2018) \cite{intro1}, they analyze the lymphatic vessel using criteria such as vessel diameter and branchpoints. These analyses are possible because they are dealing with structures that ``look like'' branches. Applying these criteria for the structures that do not look like branches (e.g. Fig.~\ref{sampleimages}(a)) will be misleading. Thus, we need to decide whether our structure is branch-like or not before we use the criteria in Frye et al. \cite{intro1}, but this decision is subjective. This also interrupts the comparison between a broader range of vessel structures such as Fig.~\ref{sampleimages}(a) and (b).
\begin{figure}[ht]
\centering
\includegraphics[width=\linewidth]{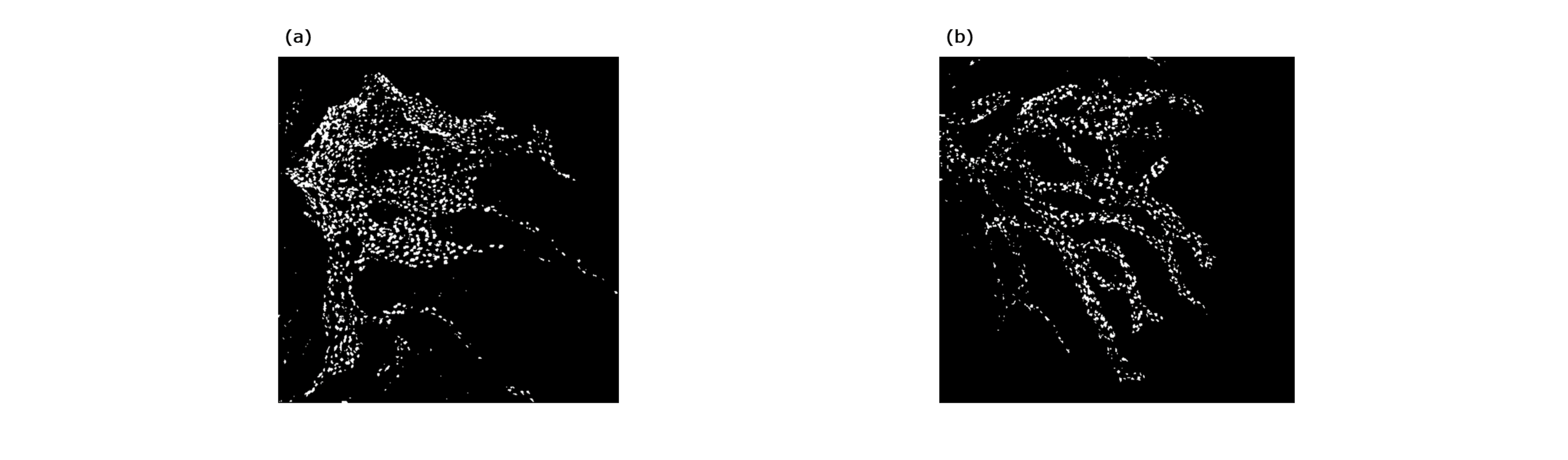}
\caption{(a,b) Examples of the images of lymph vessels that we will analyze in this paper}
\label{sampleimages}
\end{figure}
Since vessel structures have external parts and internal parts as we mentioned in the Introduction, one good way out of this problem is to separately analyze the external and internal parts. In Suchting et al. (2007) \cite{intro2}, they divide the blood vessels into 2 parts: vascular front (external parts) and vascular plexus (internal parts). In their images, the vessels form connected components, which enables the simple distinction of internal/external structures. On the other hand, our images are made of discrete cells. Thus, it is necessary to define the internal and external structures objectively. Now, we explain the motivations for persistent homological definitions of the internal/external structures and the method to extract the information from our images.

\begin{figure}[ht]
\centering
\includegraphics[width=\linewidth]{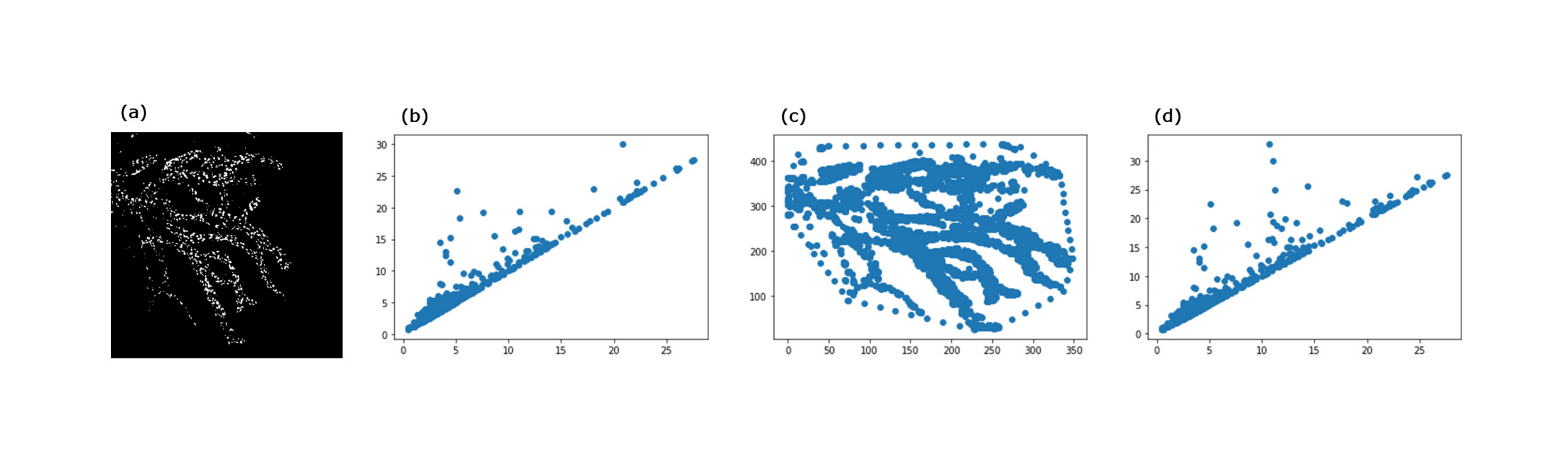}
\caption{(a) The image of lymph vessels. We use the white points as a point cloud. (b) The persistence diagram calculated from the original image of lymph vessels. (c) The points on the convex hull of the image and the original points. (d) The persistence diagram calculated from the points on the convex hull and the points of the original image}
\label{figures}
\end{figure}
From now on, we use 1-dimensional persistence diagrams unless otherwise stated, because what we want to detect is a ring structure in 2D images.
We use the white points in the original image (Fig.~\ref{figures}(a)) as a point cloud. We construct the persistence diagram from this image to get Fig.~\ref{figures}(b). Then we plot some points on the convex hull of the image (Fig.~\ref{figures}(c)). These points on the convex hull make the external structures
(based on our intuition, not defined yet) into ring structures that can be detected by persistent homology. We construct the persistence diagram from this new point cloud to get Fig.~\ref{figures}(d). We consider that internal structures should not be affected by the points on the convex hull, while external structures should be detected after adding points on the convex hull.
Therefore, we take the intersection of two persistence diagrams as the internal structures. We choose points in the latter diagram that are not in the former diagram and consider that they are the external structures. We can change the results by changing the number of points on the convex hull. If we want to compare different images, we should fix the interval of the plots. The theory in the next section is about how the external and internal structures behave when we change the plots on the convex hull.
\section{Theory}
First, we give a formal definition of internal/external structures. Let $PD_1(.)$ denote the 1-dimensional persistence diagram constructed from some point cloud.
\begin{dfn}
Let $X$ be a point cloud and $U$ be a set of points on its convex hull. We define the internal structure as $PD_1(X)\cap PD_1(X\cup U)$ and the external structure as $PD_1(X\cup U)\backslash PD_1(X)$.
\end{dfn}
Intuitively, the more points we plot on the convex hull, the fewer internal structures and the more external structures we will detect. We will see that the former is true but the latter does not hold. This statement, as it stands, is not mathematically sound, so we first make the mathematical meaning of this statement clear.

Let $X$ be an initial point cloud (the input image in our settings) and $U_1, U_2$ be sets of points on the convex hull. Suppose that there is an inclusion relationship between $U_1$ and $U_2$, namely, $\emptyset\subset U_1\subset U_2$. In the following arguments, we will change the radius of the complex used for persistent homology and the plots on the convex hull. When we want to clarify which we are talking about, we will use the notation like ``at $r=r_1$'' for radius and ``at $U=U_1$'' for plots on the convex hull instead of ``at $r_1$'' or ``at $U_1$''.

\subsection{Internal structures}
We focus on the internal structures. As we defined earlier, at $U=U_1$ and $U_2$, the detected internal structures are $PD_1(X)\cap PD_1(X\cup U_1)$ and $PD_1(X)\cap PD_1(X\cup U_2)$, respectively. The following definition explains how we relate loops in the images to the points in persistence diagrams. Here, given the points $a_1,\ldots,a_n$, the loop $[a_1,\ldots,a_n]$ means $\langle a_1,a_2\rangle+\cdots+\langle a_n,a_1\rangle$, where $\langle u,v\rangle$ represents a 1-simplex. For a loop $L$, we denote by $[L]$ the equivalence class of $L$.
\begin{dfn}
We say that a loop $L$ can represent a bar $[b,d]$ if $[L]$ can serve as a basis of the homology group of the bar at $r=b$.
\end{dfn}
We give an example in order to make clear the meaning of the following expression: a loop's equivalence class can serve as a basis of the homology group at the birth of a bar. Figure~\ref{barcodechoice} shows an image of a symmetric point cloud. If we calculate persistent homology from the image, we get the barcode shown in Fig.~\ref{barcodechoice}. The equivalence class of the loop constructed by all the points in the point cloud can serve as a basis ($1\in\mathbb{K}$) of the homology group at the birth of the longer bar. On the other hand both the equivalence classes of two smaller loops (right and left) can serve as a basis ($(0,1)\in\mathbb{K}\oplus\mathbb{K}$) of the homology group at the birth of the shorter bar.
\begin{figure}[ht]
\centering
\includegraphics[width=\linewidth]{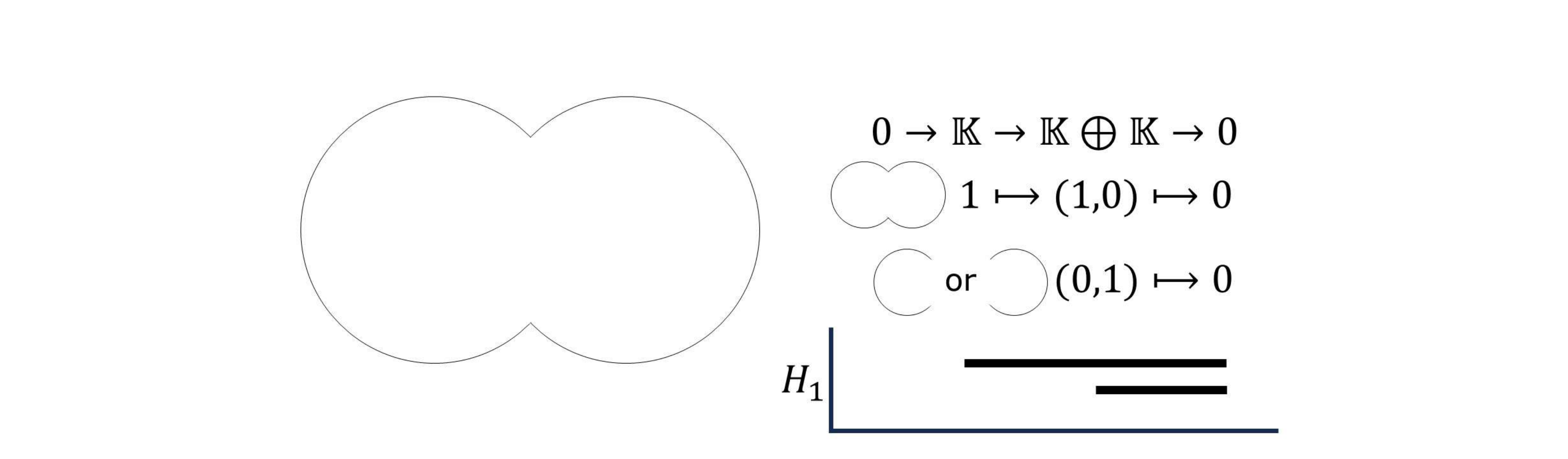}
\caption{An example to make clear the meaning of the expression: a loop's equivalence class can serve as a basis of the homology group at the birth of a bar}
\label{barcodechoice}
\end{figure}
\begin{dfn}[detectable internal structure]
A loop $L$ is called a detectable internal structure at $U$ if there exists a point $p\in PD_1(X)\cap PD_1(X\cup U)$ such that $L$ can represent $p$ both for $PD_1(X)$ and $PD_1(X\cup U)$.
\end{dfn}
We now state the proposition which shows how the internal structure behaves with respect to the points on the convex hull.
\begin{proposition}
\label{intstrmono}
If a loop that is a detectable internal structure at $U=\emptyset$ is not a detectable internal structure at $U=U_1$, then it is not a detectable internal structure at $U=U_2$.
\end{proposition}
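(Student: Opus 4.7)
The plan is to recast the condition ``$L$ is a detectable internal structure at $U$'' in terms of the death radius of $[L]$ under the \v{C}ech filtration, and then exploit the monotonicity of this radius as points are added to $X$. For any point cloud $Y$ containing the vertices of $L$, let $b_L$ denote the radius at which the last edge of $L$ enters the \v{C}ech complex---this depends only on the pairwise distances among vertices of $L$, so it is the same for $Y = X$, $X \cup U_1$, and $X \cup U_2$---and let $d_L(Y)$ denote the first radius at which $L$ becomes a boundary in $C(Y)$. Since $C(X)(r) \subseteq C(X \cup U_1)(r) \subseteq C(X \cup U_2)(r)$ for every $r$, the chain
\begin{equation*}
d_L(X) \;\geq\; d_L(X \cup U_1) \;\geq\; d_L(X \cup U_2)
\end{equation*}
holds, and any 2-chain witnessing $L$ as a boundary at some radius in $C(X \cup U_1)$ is still a 2-chain in $C(X \cup U_2)$ at the same radius.

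The key intermediate step is a characterization: for any such $Y$, the loop $L$ represents a bar $[b,d]\in PD_1(Y)$ if and only if $b = b_L$, $d = d_L(Y)$, and $[L]$ is nontrivial in $H_1(C(Y)(b_L))$. The birth side holds because $L$ is not a chain of $C(Y)(r)$ for $r < b_L$, while for $r > b_L$ the class $[L]$ already lies in the image from smaller radii via $\iota_*$, so $[L]$ cannot be a basis element of a class that is \emph{newly} born at some $r > b_L$. The death side is simply the definition of $d_L(Y)$ together with the requirement that the bar represented by $[L]$ dies when $[L]$ becomes trivial. Combined with the fact that nontriviality at $b_L$ in $C(X \cup U)$ implies nontriviality in $C(X)$ (not vice versa), this yields the clean reformulation: \emph{$L$ is a detectable internal structure at $U$ if and only if $[L]$ is nontrivial at $b_L$ in $C(X\cup U)$ and $d_L(X) = d_L(X\cup U)$}.

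The proposition now follows in two lines. Detectability at $U=\emptyset$ gives nontriviality of $[L]$ at $b_L$ in $C(X)$ and in particular $d_L(X) > b_L$. Failure of detectability at $U_1$ then forces $d_L(X\cup U_1) < d_L(X)$ (strict inequality, since $\leq$ already holds by monotonicity and equality is ruled out by the hypothesis; in the degenerate case where $[L]$ has become trivial at $b_L$ in $C(X\cup U_1)$, one reads this as $d_L(X\cup U_1) = b_L < d_L(X)$). Applying the monotonicity chain once more, $d_L(X\cup U_2) \leq d_L(X\cup U_1) < d_L(X)$, so equality fails at $U_2$ as well and $L$ is not a detectable internal structure at $U_2$.

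The only delicate point is the characterization step: the paper's definition of ``$L$ represents the bar $[b,d]$'' is phrased informally, so I would need to make precise what it means for $[L]$ to constitute a basis element of the newly born summand at $r=b$ and then argue that this pins down $b = b_L$ and $d = d_L(Y)$. Once this lemma is established, the rest of the argument is a one-line application of the monotonicity of the \v{C}ech filtration under enlargement of the underlying point set, and in particular the asymmetry that makes the analogous statement for \emph{external} structures fail is exactly that external structures are not controlled by a single monotone quantity like $d_L(Y)$.
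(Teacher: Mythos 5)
There is a genuine gap, and it sits exactly where you flagged the ``delicate point'': your proposed characterization of detectability is false in the ``if'' direction, and the rest of the argument collapses with it. Nontriviality of $[L]$ at $b_L$ in $C(X\cup U)$ together with $d_L(X)=d_L(X\cup U)$ is necessary but not sufficient for $L$ to represent a bar born at $b_L$. The missing failure mode is on the \emph{birth} side: adding the convex-hull points can create, already at radius $b_L$, a $2$-chain exhibiting $L\sim kL'$ for a loop (or linear combination of loops) $L'$ with $B(L')<b_L$, without changing $D(L)$ and without killing $[L]$. Then $[L]$ at $r=b_L$ lies in the image of $\iota_*$ from an earlier radius, so it cannot serve as a basis of a summand born at $b_L$, and $L$ represents no bar at all --- even though both of your conditions hold. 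This is precisely what the paper's Lemma~\ref{lemma} forbids, and it is the situation treated in Cases 2-1 and 1-2 of the paper's proof, which together constitute the hard part of the argument.

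Because of this, your two-line conclusion does not go through: failure of detectability at $U=U_1$ does \emph{not} force $d_L(X\cup U_1)<d_L(X)$. In the birth-side failure mode the death radius is unchanged, your single monotone quantity $d_L(\cdot)$ does not move, and the chain of inequalities says nothing about $U_2$. What actually has to be shown there is that the obstruction itself is monotone: the $2$-chain witnessing $L\sim kL'$ at $r=b_L$ in $C(X\cup U_1)$ and the earlier birth $B(L')<b_L$ persist in $C(X\cup U_2)$ (after reducing to the subcase $D(L)=d$ at $U_2$, since otherwise your death-monotonicity argument does apply), and then a statement like Lemma~\ref{lemma} must be invoked again at $U_2$. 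Your treatment of the death-side cases (the analogues of the paper's Cases 1-1 and 2-2) is fine and matches the paper's observation that births are unchanged and deaths do not increase when points are added; but the proposal as written proves only those cases.
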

In the remaining part of this subsection, we will make some definitions and statements for the proof of Prop.~\ref{intstrmono} and prove it. We first define the birth and death for loops and linear combinations of loops in a similar way as in Oda(2023)\cite{oda}.
\begin{dfn}[birth and death for loops]
The birth of a loop or a linear combination of loops $L$ denoted by $B(L)$ is $b$ if all the 1-simplex in the loop or the linear combination of loops are in $C^b_1$ for the first time at $r=b$.
The death of a loop or a linear combination of loops $L$ denoted by $D(L)$ is $d$ if the loop or the linear combination of loops is in the image of the boundary map for the first time at $r=d$.
\end{dfn}
For a bar $Bar=[b,d]$, we denote the birth $b$ by $B(Bar)$ and death $d$ by $D(Bar)$.
\begin{dfn}
We say that a loop or a linear combination of loops $L$ can be in a bar $[b,d]$ if $B(L)\in[b,d)$ and $[L]$ can serve as a basis of the homology group of the bar at $r=B(L)$.
\end{dfn}
Note that if a loop $L$ can be in a bar $[b,d]$, then $D(L)=d$.
\begin{lemma}
\label{lemma}
Let $L, L'$ be two loops or linear combinations of loops. Suppose that $\exists k\in\mathbb{K}\backslash\{0\}, L\sim kL'$ at $r=c$. Let $D(L)=D(L')=d$. Let $B(L')= b'<c$. Then, for any $b$ such that $b'<b<d$, $L$ cannot be in a bar $[b,d]$. Also, $L$ cannot represent a bar $[b,d]$.
\end{lemma}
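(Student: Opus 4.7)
The plan is to exploit the structure theorem for $\mathbb{K}[x]$-graded modules and track how the class of $L$ decomposes into interval summands at various radii. The key observation is that the hypothesis $L\sim kL'$ at $r=c$ means $[L]=k[L']$ in the homology at $r=c$; since $B(L')=b'<c$, the class of $L'$ at $r=c$ is the inclusion-induced image of its class at $r=b'$, so the class of $L$ at $r=c$ lies in the image of the inclusion map from the homology at $r=b'$ into the homology at $r=c$.

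By the structure theorem applied to the $\mathbb{K}[x]$-graded module $H_1(C)$, any such image class, when expressed in the interval decomposition at $r=c$, can only involve bars $[b_\alpha,d_\alpha)$ with $b_\alpha\leq b'$. In particular, for every bar $[b,d]$ with $b'<b$ (and $d>c$, so that the bar is still alive at $r=c$), the coefficient of $[L]$ at $r=c$ on that bar must be zero.

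The remainder is a contradiction argument that splits on whether $b\leq c$ or $b>c$. If $b\leq c$, then either ``$L$ is in $[b,d]$'' forces $B(L)\in[b,d)$ with $[L]$ having a nonzero $[b,d]$-coefficient at $r=B(L)$, or ``$L$ represents $[b,d]$'' forces a nonzero $[b,d]$-coefficient at $r=b$; in either case, pushing forward to $r=c$ preserves this coefficient because $d>c$ keeps the bar alive, contradicting the previous paragraph. If $b>c$, then ``in $[b,d]$'' requires $B(L)\geq b>c$, contradicting the existence of $L$ at $r=c$ needed to state $L\sim kL'$ there; for ``represents $[b,d]$'', I would push the relation $[L]=k[L']$ forward from $r=c$ to $r=b$, where the right-hand side has components only on bars born at or before $b'$, again contradicting the nonzero $[b,d]$-coefficient of $[L]$ at $r=b$.

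The main obstacle I anticipate is bookkeeping rather than conceptual: carefully distinguishing ``being in'' from ``representing'' a bar (the former pins down the birth radius while the latter does not), and verifying that the interval-decomposition argument applies verbatim to linear combinations of loops over a general field $\mathbb{K}$, not just to single loops.
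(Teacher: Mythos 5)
Your proof is correct and rests on the same mechanism as the paper's: since $[L]=k[L']$ at $r=c$ factors through the homology at $r=b'$, the interval decomposition forces the component of $[L]$ on any bar born after $b'$ to vanish, which is incompatible with $L$ being in, or representing, a bar $[b,d]$ with $b>b'$. The paper realizes the same idea via an explicit linear combination of representative loops of bars and their linear independence rather than via the image of the persistence-module map, and both arguments share the implicit (and in context harmless) assumption that $c<d$.
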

\begin{proof}
$L'$ can be written as the linear combination of loops whose equivalence classes are the bases of homology groups of bars at $r=b'$. After $b'$, the same linear combination of loops is equivalent to $L'$ at any radius. If all the bars' deaths are less than $d$, then $D(L')<d$, which is a contradiction. In the linear combination of the loops, we choose the ones whose death points are $d$ (this set is not empty). We set all the other loops' coefficients in the linear combination to be $0$. There exists a positive number $\epsilon$ such that the new linear combination is equivalent to $L'$ after $d-\epsilon$. This linear combination is also equivalent to $(1/k)L$ after $\max\{c,d-\epsilon\}$. Suppose that $L$ can be in a bar $[b,d]$. Since $b'<b$, no loop in the new linear combination is chosen from the bar $[b,d]$. This contradicts the fact that the bars are linearly independent at any point. We can use the same argument if we suppose that $L$ can represent a bar $[b,d]$.
\end{proof}
\begin{lemma}
\label{lemma2}
If a loop $L$ with $B(L)=b$ and $D(L)=d$ can represent a bar, then the bar is $[b,d]$.
\end{lemma}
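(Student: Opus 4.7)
The plan is to assume $L$ represents some bar $[b^*,d^*]$ and show $b^*=b$ and $d^*=d$ by establishing the death first and then leveraging Lemma~\ref{lemma} to pin down the birth. Throughout I would be careful to distinguish two different things: $L$ being a cycle in $C_1^r$ (a condition on the chain) versus $[L]$ being a nonzero class in $H_1(C^r)$ (a condition on the homology class).

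For the death equality, since $[L]$ serves as a basis of the bar at $r = b^*$, for every $r \in [b^*, d^*)$ the class $[L]$ in $H_1(C^r)$ is a nonzero scalar multiple of the bar's generator, so $L$ is not a boundary there. At $r = d^*$ the bar dies, which by the $\mathbb{K}[x]$-module decomposition means $[L] = 0$ in $H_1(C^{d^*})$; hence $L$ is in the image of $\partial$ at $r = d^*$. Combining, the first radius at which $L$ becomes a boundary is exactly $d^*$, so $D(L) = d^* = d$. For the birth equality, note first that $[L]$ being defined as a homology class at $r = b^*$ forces all $1$-simplices of $L$ to be present by $r = b^*$, giving $B(L) = b \le b^*$. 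To obtain the reverse inequality, I would apply Lemma~\ref{lemma} with $L' = L$ (so $L \sim 1 \cdot L'$ trivially at any $c > b$), $b' = B(L) = b$, and $D(L) = D(L') = d = d^*$. Lemma~\ref{lemma} then says $L$ cannot represent a bar $[b^{**}, d]$ for any $b^{**}$ with $b < b^{**} < d$; since $L$ does represent $[b^*, d^*] = [b^*, d]$ and $b \le b^* < d^* = d$, this forces $b^* = b$.

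The hard part is already packaged inside Lemma~\ref{lemma}, which captures the delicate interaction between linear equivalence at a given radius and the uniqueness of the bar decomposition of $H_1(C)$ as a $\mathbb{K}[x]$-module. Beyond invoking it, the only subtleties to watch are the chain-versus-class distinction noted above and the observation that the hypothesis that $L$ represents a bar forces $[L]\ne 0$ at the birth of that bar, which is what allows one to rule out the degenerate case where $L$ were already a boundary before $b^*$.
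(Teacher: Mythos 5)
Your proof is correct and takes essentially the same route as the paper's: rule out a birth earlier than $B(L)=b$ because $L$'s $1$-simplices must already be present at the bar's birth, and rule out a birth strictly between $b$ and $d$ by applying Lem.~\ref{lemma} with $L'=L$. The only difference is that you make explicit the preliminary step $D(L)=d^{*}$, which the paper leaves implicit by writing the bar as $[b',d]$ from the outset.
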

\begin{proof}
Suppose that $L$ can represent a bar $[b',d]$. It is impossible to have $b'<b$. Thus, the only possibility for $b\neq b'$ is $b<b'(<d)$. $L\sim L$ at $r=b'$. $B(L)$ is less than $b'$. Then, from Lem.~\ref{lemma}, $L$ class represent a bar $[b',d]$.
\end{proof}
From this, we can see that if a loop can represent a bar, then the loop can be in the bar.
Now, we will prove Prop.~\ref{intstrmono}.
\begin{proof}[Proof of Prop.~\ref{intstrmono}]
Suppose that a loop $L$ with $B(L)=b$ and $D(L)=d$ is a detectable internal structure at $U=\emptyset$. From Lem.~\ref{lemma2}, $[b,d]$ is the only bar that $L$ can represent. Therefore, $L$ not being a detectable internal structure at $U=U_1$ means that at $U=U_1$, $L$ cannot represent the bar $[b,d]$. We divide the proof into cases.

\vspace{5pt}

\noindent\underline{Case1}: $L$ cannot be in a bar at $U=U_1$.\\
\hspace*{8pt}Case1-1: $B(L)=D(L)$.\\
\hspace*{8pt}Case1-2: $L$ can only be expressed as a linear combination of loops whose equivalence classes are bases of homology groups of multiple bars.\\
\underline{Case2}: $L$ can be in a bar at $U=U_1$.\\
\hspace*{8pt}Case2-1: The birth of the bar in Case2 must be smaller than $b$.\\
\hspace*{8pt}Case2-2: $D(L)\neq d$.

\vspace{5pt}

Note that when we increase the points in the point cloud, the birth of a loop does not change and the death of a loop does not get large. Therefore, we see that in Case 1-1 and Case 2-2, we get into the same situation at $U=U_2$.

When we deal with Case 1-2 and Case 2-1, we can assume that $D(L)=d$ at $U=U_1$ and $U_2$, because otherwise, if $D(L)$ is not $d$ at $U=U_1$, we can use a similar argument as in Case1-1 and Case2-2, and if $D(L)$ is not $d$ at $U=U_2$, from Lem.~\ref{lemma2}, $L$ cannot represent a bar $[b,d]$.

First, we look at Case 2-1. Let $b'$ be the birth of the bar that $L$ can be in. Let $L'$ be the loop that can represent the bar $[b',d]$. At $U=U_1$, $L\sim kL'$ at $r=b$, where $k\in\mathbb{K}\backslash\{0\}$. Also, we have $B(L')=b'<b$. These two properties also hold at $U=U_2$. Since $b'<b<d$, from Lem.~\ref{lemma}, $L$ cannot represent a bar $[b,d]$ at $U=U_2$.

Finally, we deal with Case 1-2. At $r=b$, take the loops $L_1,\ldots,L_k$ where $[L_i]$s are bases of homology groups of multiple bars $Bar_1,\ldots,Bar_k$ such that $L=c_1L_1+\cdots+c_kL_k$, where $c_1,\ldots,c_k$ are coefficients. Since $D(L)=d$, $\max_{1\le i\le k}(D(Bar_i))=d$. Define $I$ to be $I:=\{1\le i\le k|D(Bar_i)=d\}$. If there exists $i\in I$ such that $B(Bar_i)=b$, then we can choose $[L]$ as a basis of the homology group of $Bar_i$ instead of $[L_i]$, which is a contradiction. Thus, $b'':=\max_{i\in I}(B(Bar_i))<b$. At $r=b''$, take the loops $\{L''_i\}_{i\in I}$ where $[L''_i] (i\in I)$ are bases of homology groups of $Bar_i (i\in I)$. We can choose the coefficients $c''_i (i\in I)$ such that $[c''_iL''_i]=[c_iL_i]$ at $r=b$. Let $L''$ be $L'':=\sum_{i\in I}c''_iL''_i$. There exists a sufficiently small positive number $\epsilon$ such that $[L'']=[L]$ at $r=d-\epsilon$. We also have $B(L'')\leq b''<d-\epsilon$. These two properties also hold at $U=U_2$. Since $b''<b<d$, from Lem.~\ref{lemma}, $L$ cannot represent a bar $[b,d]$ at $U=U_2$.
\end{proof}

This result can be understood as the monotonicity relationship with respect to the plots on the convex hull.
\subsection{External structures}
Now, we deal with external structures. As we defined earlier, at $U=U_1$ and $U_2$, the detected internal structures are $PD_1(X\cup U_1)\backslash PD_1(X)$ and $PD_1(X\cup U_2)\backslash PD_1(X)$, respectively.
\begin{dfn}
A loop $L$ is called a detectable external structure at $U$ if there exists a point $p\in PD_1(X\cup U)\backslash PD_1(X)$ such that $L$ can represent $p$ for $PD_1(X\cup U)$.
\end{dfn}
For external structures, monotonicity relationships as in the internal structures do not hold. This can be understood from Fig.~\ref{branchtheory}. In Fig.~\ref{branchtheory}(b), we add a point on the left side of the image in Fig.~\ref{branchtheory}(a). In Fig.~\ref{branchtheory}(c), we add points on the lower part of the image in addition to the point on the left side. We calculate persistent homology for these three images. The barcodes are shown in a way that makes it clear how the bases of the homology groups in the bars are mapped when the radius changes. We will see that the large loop which is a detectable external structure in Fig.~\ref{branchtheory}(b) is not a detectable external structure in Fig.~\ref{branchtheory}(c).
\begin{figure}[ht]
\centering
\includegraphics[scale=0.2]{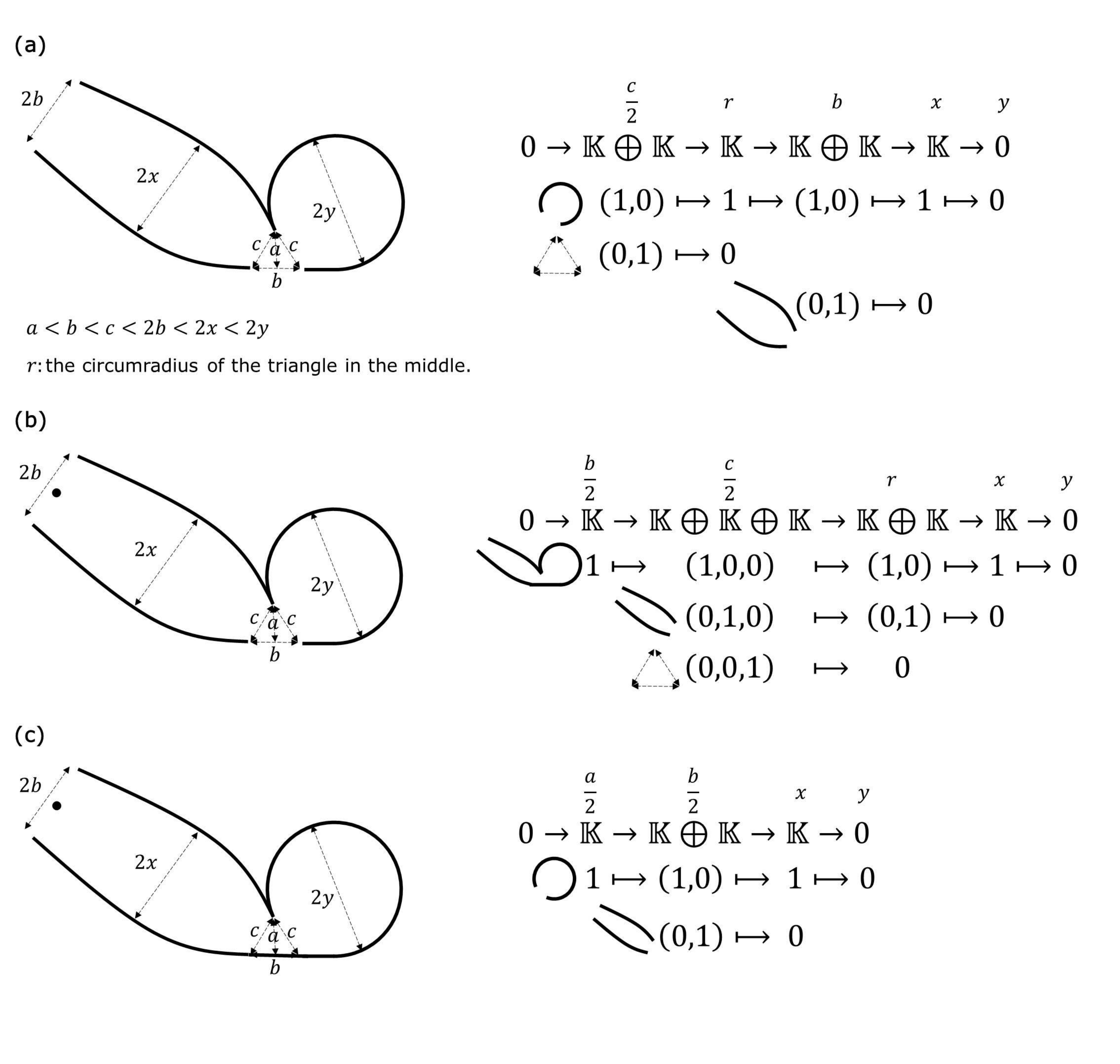}
\caption{(a) The original image and its barcode. (b) The original image plus the point on the left opening and the resulting barcode. (c) The image in (b) plus the points on the lower opening and the resulting barcode. The large loop which is a detectable external structure in (b) cannot be a detectable external structure in (c)}
\label{branchtheory}
\end{figure}
This phenomenon is related to the resolution of the image. If we look at the original figure from far away, we only find the opening in the left part of the image. Then, we will think that the right part of the figure is part of a large external structure. On the other hand, if we take a close look at the image, we will also find the opening in the lower part of the image.
\section{Results}
In the following part of the paper, the unit for the lengths is pixel unless otherwise specified.
\subsection{Lymph vessels}
We first count the number of external/internal structures in the images of lymph vessels. We plot points on the convex hull at the interval of 20. We count the number of detected structures whose persistences are larger than 5. The result is shown in Fig.~\ref{branchnumber}. We plotted the circumcenter of the death positions as in Oda et al. (2023) \cite{oda2}.
\begin{figure}[h]
\centering
\includegraphics[width=\linewidth]{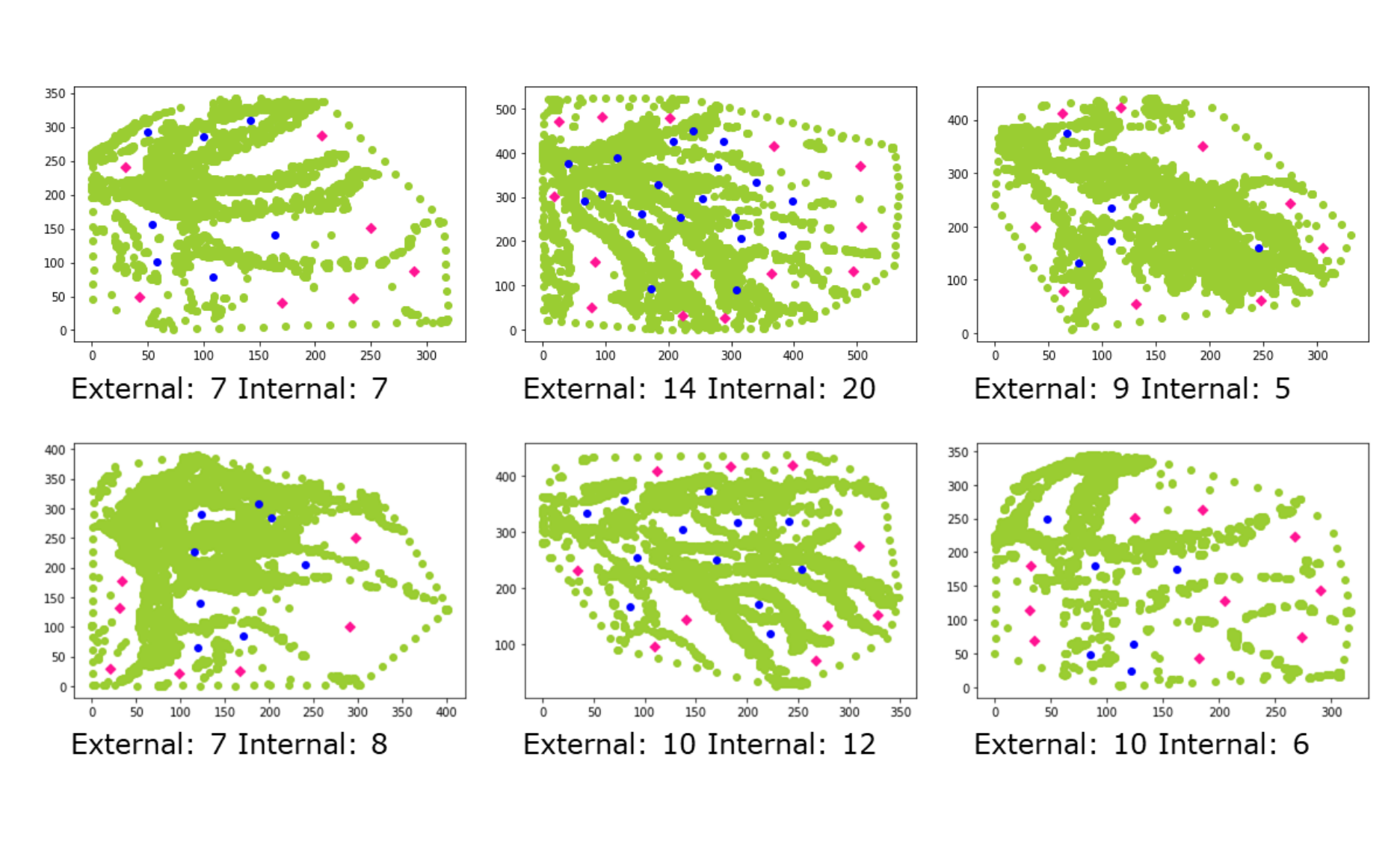}
\caption{The result of detecting external/internal structures in the images (red/blue points)}
\label{branchnumber}
\end{figure}
Since we have the information of positions of external/internal structures, we can explore their spatial distributions. We can also extract their size information from death points, which can be used, for example, for the comparison of external and internal structure sizes.

Since we have constructed diagrams for external and internal structures, we can also conduct more topological analyses of our results. For example, we can focus on the internal structures and construct persistence landscapes from them. We calculate the integral of the square of the first function in the persistence landscape. The result is shown in Online Resource (Supplementary Fig. S1). This could be understood as a measure of some kind of internal complexity. Constructing these kinds of measures from diagrams and finding biologically important ones will be interesting future work (See ``Discussion'' section).
\subsection{Neurons}
Our method can be applied to a wide range of branch structures that appear in biology. Here, we present an example of the analysis of branch structure in neurons. Figure~\ref{neuron}(a) shows the result of analyzing the image in Tschida et al. (2012)\cite{tschida} (Fig. 1 A, we used the image without the white arrow that is available in \texttt{https://medicalxpress.com/news/2012-03-deafening-affects-vocal-nerve-cells.html}.) The interval for the plots on the convex hull was 10, and the threshold for the persistences was 2. We also analyzed the image in Desai-Chowdhry et al. (2022)\cite{desai-chowdhry} (Fig. 1). The interval for the plots on the convex hull was 15. If we set the threshold for the persistences to be 2, we get Fig.~\ref{neuron}(b). If we want to detect only the long persisting internal structures, we can increase the threshold of the persistence for the internal structure. As a result, we get Fig.~\ref{neuron}(c) (the threshold for internal structure is changed into 7).
\begin{figure}[h]
\centering
\includegraphics[width=\linewidth]{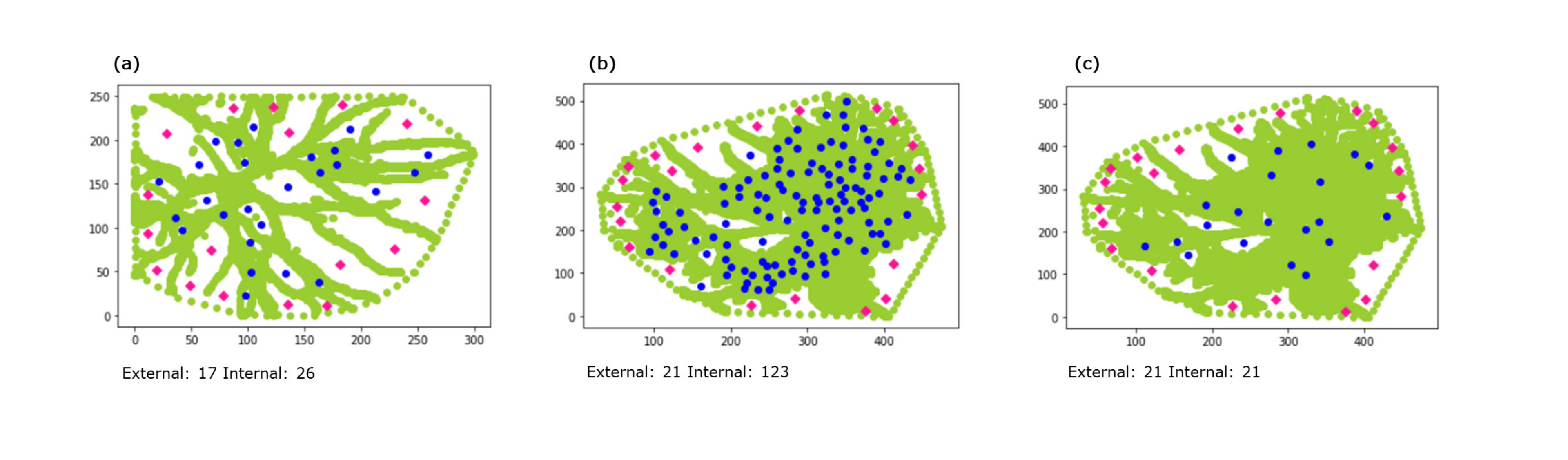}
\caption{(a) The result of analyzing the image of a neuron\cite{tschida}. The detected external/internal structures are plotted by red/blue points. (b,c)The result of analyzing the image of a neuron\cite{desai-chowdhry}. We can increase the threshold of persistence if we want to focus on the long persisting structures}
\label{neuron}
\end{figure}
\subsection{Blood vessels}
Now, we will apply our method to blood vessels in Liu et al. (2010)\cite{liu} (Fig. 2B left and right). Figure~\ref{bloodvessel}(a) shows the result of analyzing the two images with the interval for the plots on the convex hull being 15 and the threshold of the persistences being 2. We detect more internal structures in the left image (Notch3(+/+)) compared to the right image (Notch3(-/-)). If we change the threshold of the persistence for internal structure to 10, we detect fewer internal structures in Notch3(+/+) than in Notch3(-/-) (Fig.~\ref{bloodvessel}(b)).
\begin{figure}[h]
\centering
\includegraphics[width=\linewidth]{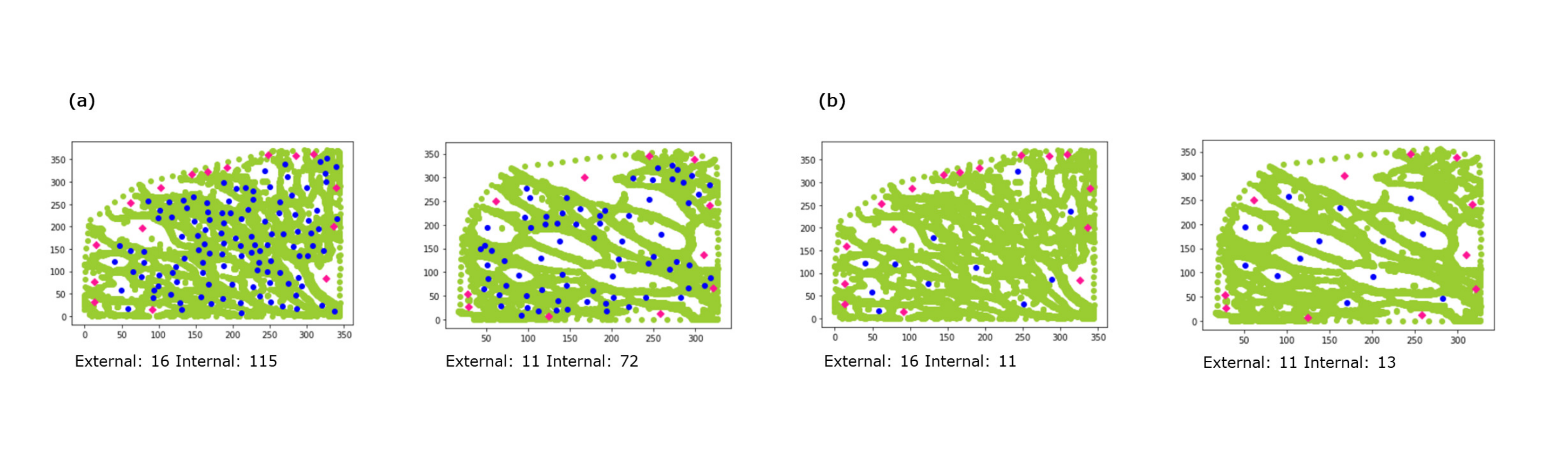}
\caption{(a) The result of analyzing the images in Liu et al.\cite{liu}. More internal structures are detected in the left image compared to the right image. (b) The result of increasing the threshold of the persistences for the internal structures. Now, we detect more internal structures in the right image compared to the left image}
\label{bloodvessel}
\end{figure}
\section{Discussion}
\subsection{Comparison with non-TDA methods}
In Thunbo et al. (2017)\cite{placentaarea}, they analyze the vessel using the area of white points divided by the area of the convex hull. This method is reasonable because they tried to analyze the density of the vessels. It is expected that this method returns a larger value if we make the width of the vessels larger without changing their shape. On the other hand, we aim to look at external and internal structures. Thus, our method should not be affected greatly by the change in the width of the vessels. We use model images in Online Resource (Supplementary Fig. S2) to see this difference. Table~\ref{comparison} shows that the method in Thunbo et al. (2017) is affected by the change in the width of vessels, while our method is not. We set the interval of the plots on the convex hull to be $3$ and the threshold for the persistence to be $0.5$.
\begin{table}[ht]
\centering
\begin{tabular}{ccccccccc}
\hline
method & model 1&model 2&model 3&model 4&model 5&model 6&model 7&model 8 \\
\hline\hline
\begin{tabular}{c}
vessel area\\over convex hull
\end{tabular}&0.13&0.18&0.24&0.29&0.34&0.38&0.42&0.46\\
\hline
\begin{tabular}{c}
number of external\\/internal structure\\determined with PH
\end{tabular}&10/11&9/11&10/11&9/11&11/11&10/10&10/12&10/12\\
\hline
\end{tabular}
\caption{\label{comparison}The method in Thunbo et al. (2017) is affected by the change in the width of vessels, while our method is not. A model with a larger number has a greater line width}
\end{table}

Skeletonization is one of the standard methods to analyze branch or network-like structures as in Carpentier et al. (2020)\cite{imageJskeleton}, where they analyze angiogenesis. One of our purposes is to determine whether some structure is a branch or not, but skeletonization makes everything into branch-like images. Thus, this is not suited to our purpose.

Since the main part of our method is to define external and internal structures, there is no such thing as ``ground truth'' or ``expected result''. Thus, it is impossible to give some statistical evaluation of our results against them. Rather, it might be interesting as future works to create several features based on our method and use them for images, for example, created by a simulation of branch formation in order to find features that express some kind of biological status.

\subsection{Comparison with other TDA methods}
TDA has been applied to analyze various biological structural patterns. When we focus on branch structures, Reimann et al. (2017)\cite{reimann} and Kanari et al. (2018, 2019)\cite{kanari,kanari3} deal with neuronal structures. Bendich et al. (2016)\cite{bendich} analyzes brain arteries and Jiang (2022)\cite{jiang} analyzes retina blood vessels using persistent homology. When we look for a wider range of patterns in biology or medicine, Tanabe et al. (2021)\cite{tanabe} uses TDA for the CT images of COPD, IPF, and CPFE, and Giansiracusa et al. (2019)\cite{giansiracusa} conducts fingerprint classification with persistent homology. Thomas et al. (2021)\cite{thomas} takes a topological approach to \textit{C. elegans} movement patterns. Also, there are various related theoretical results such as Kanari et al. (2020)\cite{kanari2}.

Our method adds to this existing TDA knowledge especially in the following points:
\begin{itemize}
\item{We intentionally add points on the convex hull to define the internal and external structures. Adding new points intentionally to the original data to detect some desired structures is a novel way to use TDA.}
\item{We compare two different persistence diagrams and define new diagrams by taking the intersection or difference between them. We also constructed mathematical theories for these new diagrams. This, combined with the above point, provides a novel framework for TDA itself as well as its new usage.}
\end{itemize}

\subsection{More discussions on the underlying idea of our framework}
In the theory section, we explained how the internal and external structures change under the change of the points on the convex hull.
Now, the readers might wonder whether the results of our framework (internal/external structures) are extracting some intrinsic property of the original data. This question is related to the underlying idea of our framework: the notion of internal/external structures is not determined by the data alone but also by our purposes. For example, we might want to limit our focus on the ``very best" internal structures or we might want to include some ``intermediate" structures.

To justify these subjective ideas, an objective and quantitative description should be developed. Using our framework, we can explain the criteria used for the internal/external structure distinction objectively and quantitatively, e.g. ``We plotted points on the convex hull of the images at the interval of 20."
In Online Resource (Supplementary Fig. S3), we show an example of the change of internal/external structures with respect to the plots on the convex hull.

Our framework naturally connects to multiparameter persistence, and once its data analysis framework (such as mapping the algebraic structures back to the original data) is developed, it will improve our framework.


\subsection{Generalized persistence landscape}
Finally, we will introduce the generalized persistence landscape. Although we do not use it in this paper, we will construct it for future applications. One of the good points of the persistence landscape is that the process is invertible. We can recover the persistence diagram from the persistence landscape. The problem with persistence landscape is that, since it uses the diagonal line as the baseline, it assumes the importance of the lengths of bars.
Sometimes, the important features of the data might not lie in long bars\cite{bendich}. Also, the fact that our method preserves the information on births and so on allows us to expect that our method might work well with a tool that makes it possible to focus on features other than persistence.
We want to generalize the persistence landscape in a way that meets this requirement and at the same time preserves the invertibility.
We define the generalized persistence landscape as follows. Let $R(\phi): \mathbb{R}^2\rightarrow\mathbb{R}^2$ denote the rotation by $\phi$ around the origin, and $T(d): \mathbb{R}^2\rightarrow\mathbb{R}^2$ denote the translation by $d$ along the $y$-axis. For a persistence diagram $D$, we denote by $\lambda(D)$ the persistence landscape constructed from $D$. For $X\subset\mathbb{R}^2$, $X_{+}:=\{(x,y)\in X|x\le y\}$ and $X_{-}:=\{(x,y)\in X|x>y\}$. Let $S: \mathbb{R}^2\rightarrow\mathbb{R}^2$ denote the swapping of the coordinates ($(x,y)\mapsto (y,x)$).
\begin{dfn}[Generalized persistence landscape]
For a persistence diagram $D\subset\mathbb{R}^2$, the generalized persistence landscape $G\lambda[\theta,y](D)$ is defined to be $G\lambda[\theta,y](D):=\lambda((R(\pi/4-\theta)\circ T(-y)(D))_{+})\cup \lambda(S((R(\pi/4-\theta)\circ T(-y)(D))_{-}))$. We represent the elements of $\lambda((R(\pi/4-\theta)\circ T(-y)(D))_{+})$ as $G\lambda_1, G\lambda_2,\ldots,G\lambda_k,\ldots$ and the elements of $\lambda(S((R(\pi/4-\theta)\circ T(-y)(D))_{-}))$ as $G\lambda_{-1}, G\lambda_{-2},\ldots,G\lambda_{-k},\ldots$.
\end{dfn}
This definition means that we change our baseline from the diagonal line to an arbitrary line. See the example figure in Online Resource (Supplementary Fig. S4).


Since $\lambda$, $R(\phi)$, and $T(d)$ are all invertible, we can recover the persistence diagram from the generalized persistence landscape. The original persistence landscape $\lambda$ is included in the generalized persistence landscape as $G\lambda[\pi/4,0]$.

The concepts of linear combination, difference, and norm of persistence landscapes can be extended to generalized persistence landscapes. The only difference is that now we have functions with negative subscripts.

When we want to use the generalized persistence landscape for image classification, we, for example, calculate the generalized persistence landscape for each of the images, take the average landscape for each of the groups, and calculate the distance of a landscape from the average landscapes. We classify the landscape to the group whose average landscape is closest to the input landscape. We change the parameters and see how the classification result changes. This process goes along well with gradient descent in machine learning because the distance between the landscape and the average landscape is partially differentiable with respect to $\theta$ and $y$. We will prove this in a more general setting. We use 2-norm for the distance. In the following proposition, the rotation and translation are generalized as moving points of the diagrams. Thus, the points in persistence diagrams can be in the region $y\le x$, and when we say persistence landscape, we mean $G\lambda[\pi/4,0]$.
\begin{proposition}
\label{prop1}
Suppose that we are given a persistence diagram $D$ and persistence diagrams ${D'}^j (j=1,\ldots,N)$. For each of the points $p_t$ in the persistence diagrams, we give constants $\alpha_t, \beta_t$. We move each of the points $p_t$ in the persistence diagrams by $\alpha_t\epsilon+O(\epsilon^2)$ along the $x$-axis and by $\beta_t\epsilon+O(\epsilon^2)$ along the $y$-axis. Let $\tilde{D}$ and $\tilde{{D'}^j}$ be the resulting diagrams. Define $d(\epsilon)$ to be the distance between the landscape of $\tilde{D}$ and the average landscape of $\tilde{{D'}^j}$. Then, $d(\epsilon)$ is differentiable at $\epsilon=0$.
\end{proposition}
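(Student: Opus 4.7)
The plan is to reduce the claim to the smoothness of $d(\epsilon)^{2}$ and then recover differentiability of $d(\epsilon)$ by taking a square root. Concretely, I would first show that $d^{2}\in C^{\infty}$ in a neighborhood of $\epsilon=0$ by writing the squared distance as a finite sum of integrals over intervals whose endpoints and integrands depend smoothly on $\epsilon$. Under the generic hypothesis $d(0)>0$, differentiability of $d$ at $\epsilon=0$ then follows from the chain rule; the degenerate case $d(0)=0$ deserves a brief remark.

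For each point $p_t(\epsilon)=(b_t(\epsilon),d_t(\epsilon))$ of $\tilde D$ or of some $\tilde{{D'}^j}$, the associated tent function
\[
\Lambda_t(s;\epsilon):=\max\bigl(0,\min(s-b_t(\epsilon),\,d_t(\epsilon)-s)\bigr)
\]
is piecewise linear in $s$ with breakpoints $b_t(\epsilon)$, $(b_t(\epsilon)+d_t(\epsilon))/2$, $d_t(\epsilon)$, each smooth in $\epsilon$ by hypothesis. The $k$-th landscape $\lambda_k(\tilde D)(s;\epsilon)$ is the $k$-th largest value at $s$ among $\{\Lambda_t(s;\epsilon):p_t\in\tilde D\}$, and the average $\bar{\lambda}_k(s;\epsilon):=(1/N)\sum_j\lambda_k(\tilde{{D'}^j})(s;\epsilon)$ is again piecewise linear in $s$.

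The main step I would carry out is a combinatorial stability lemma: for $|\epsilon|$ sufficiently small, the $s$-axis decomposes into finitely many intervals $\{I_m(\epsilon)\}$ on each of which every $\lambda_k(\tilde D)$ and every $\lambda_k(\tilde{{D'}^j})$ coincides with a single linear function of $s$. The endpoints of $I_m(\epsilon)$ are the tent breakpoints together with the pairwise crossings $\Lambda_i(s;\epsilon)=\Lambda_j(s;\epsilon)$; each such crossing solves a linear equation in $s$ whose coefficients are smooth in $\epsilon$, so it depends smoothly on $\epsilon$ as long as endpoints remain distinct. On each $I_m(\epsilon)$, the integrand $(\lambda_k(\tilde D)-\bar{\lambda}_k)^{2}$ is a polynomial in $s$ with coefficients smooth in $\epsilon$, and the Leibniz integral rule shows that
\[
\frac{d}{d\epsilon}\int_{I_m(\epsilon)}\bigl(\lambda_k(\tilde D)(s;\epsilon)-\bar{\lambda}_k(s;\epsilon)\bigr)^{2}\,ds
\]
exists and is smooth in $\epsilon$. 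Summing these finitely many contributions over $k$ and $m$ yields $d(\epsilon)^{2}\in C^{\infty}$ near $\epsilon=0$, and whenever $d(0)>0$ we obtain differentiability of $d(\epsilon)$ at $\epsilon=0$ by the chain rule.

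The principal obstacle lies in the combinatorial stability step at degenerate $\epsilon=0$ configurations---two crossings colliding, a crossing coinciding with a tent breakpoint, or two tent functions agreeing on a subinterval. In each case some of the $I_m(\epsilon)$ may collapse to zero length or split as $\epsilon$ changes sign, and one must check case by case that the affected integrals glue into a $C^{1}$ function of $\epsilon$. The key fact making this work is that the landscape functions are continuous in $s$, so the integrand is continuous across the shared boundaries and contributions from collapsing intervals are $O(\epsilon^{2})$ with bounded derivatives; alternatively, a density argument that perturbs the diagrams to a generic configuration and passes to the limit yields the same conclusion. The only genuine caveat is the non-generic case $d(0)=0$, where $d(\epsilon)^{2}$ vanishes to order at least two and $d(\epsilon)$ is typically only one-sided differentiable; this does not affect the gradient-descent application motivating the proposition.
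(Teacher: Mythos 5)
Your proposal is correct and reaches the right conclusion, but it takes a genuinely different route from the paper. The paper does not partition the $s$-axis at $\epsilon$-dependent breakpoints and crossings; it works directly with the difference functions $\epsilon_i=\lambda_i-\tilde{\lambda_i}$, observing that on each interval where the unperturbed landscape is linear the perturbation is a constant of size $O(\epsilon)$ except on transition zones of width $O(\epsilon)$ at the ends. It then expands each squared integral into the unperturbed integral, a cross term of the form $w\epsilon\int(\lambda_i-{\lambda'_i}^j)\,dx+O(\epsilon^2)$, and a quadratic term bounded by $O(\epsilon^2)$, so that $d(\epsilon)^2=d(0)^2+c\epsilon+O(\epsilon^2)$. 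Your route via the combinatorial stability lemma and the Leibniz rule is more systematic and would in principle give more (smoothness of $d^2$ away from degenerate configurations), but it forces you to confront the degeneracies --- colliding breakpoints and crossings --- head-on, which is precisely the case analysis you identify as the main obstacle; the paper's fixed-partition estimate sidesteps that bookkeeping, at the cost of leaving the analogous combinatorial claim about the structure of $\epsilon_i$ (which tent realizes the $k$-th maximum inside the transition zones) largely implicit. Your explicit caveat about $d(0)=0$ is well taken and is in fact a point where your write-up is more careful than the paper's: the published proof also tacitly needs $d(0)>0$ to pass from the expansion of $d(\epsilon)^2$ to differentiability of $d(\epsilon)$, and does not mention the degenerate case.
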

\begin{proof}
Let $\lambda_i (i=-k,\ldots,-1,1,\ldots,n)$ be the elements of persistence landscape for $D$, and ${\lambda'_i}^j$ be the elements of persistence landscape for ${D'}^j$. Let $\tilde{\lambda_i}$ and $\tilde{{\lambda'_i}^j}$ be the elements of corresponding persistence landscapes after moving points.
$d(0)$ can be written as
\[
d(0)=\frac{1}{N}\sqrt{\sum_{i=1}^n\int(\sum_{j=1}^N\lambda_i-{\lambda'_i}^j)^2dx+\sum_{i=-k}^{-1}\int(\sum_{j=1}^N\lambda_i-{\lambda'_i}^j)^2dx}
\]
$d(\epsilon)$ can be written similarly with tildes on lambdas.
Choose $\epsilon$ to be so small that for all $\lambda_i$ or ${\lambda'_i}^j$ and corresponding $\tilde{\lambda_i}$ or $\tilde{{\lambda'_i}^j}$, the difference $\epsilon_i=\lambda_i-\tilde{\lambda_i}$ or ${\epsilon'}_i^j={\lambda'_i}^j-\tilde{{\lambda'_i}^j}$ satisfies the following: for all intervals where $\lambda_i$ or ${\lambda'_i}^j$ does not bend, the slope of $\epsilon_i$ or ${\epsilon'}_i^j$ is $0$ except for both ends whose width can be written as $b\epsilon$, where $b$ is the constant determined by $\alpha_t, \beta_t$. The value of $\epsilon_i$ or ${\epsilon'}_i^j$ where its slope is $0$ can be written as $c\epsilon$, where $c$ is the constant determined by $\alpha_t, \beta_t$. We need to show that $(d(\epsilon)-d(0))/\epsilon$ converges when $\epsilon\to 0$. Since we have the following equality:
\begin{equation*}
\begin{split}
\int(\sum_{j=1}^N\tilde{\lambda_i}-\tilde{{\lambda'_i}^j})^2dx=&\int(\sum_{j=1}^N\lambda_i-{\lambda'_i}^j)^2dx+2\sum_{j=1}^N\sum_{s=1}^N\int(\lambda_i-{\lambda'_i}^j)(\epsilon_i-{\epsilon'_i}^s)dx\\
&+\sum_{j=1}^N\sum_{s=1}^N\int(\epsilon_i-{\epsilon'_i}^j)(\epsilon_i-{\epsilon'_i}^s)dx,
\end{split}
\end{equation*}
we can prove the statement by showing that the last 2 terms in the above equality are of the form $const.\epsilon+O(\epsilon^2)$.

We first deal with $\int(\epsilon_i-{\epsilon'_i}^j)(\epsilon_i-{\epsilon'_i}^s)dx$. We have $|\int(\epsilon_i-{\epsilon'_i}^j)(\epsilon_i-{\epsilon'_i}^s)dx|\le\int|\epsilon_i-{\epsilon'_i}^j||\epsilon_i-{\epsilon'_i}^s|dx$. The maximum value of $|\epsilon_i-{\epsilon'_i}^j|$ can be written as $r\epsilon$, where $r$ is a constant determined by $\alpha_t,\beta_t$. Also, the support of $|\epsilon_i-{\epsilon'_i}^j||\epsilon_i-{\epsilon'_i}^s|$ has finite length. Therefore, we have $|\int(\epsilon_i-{\epsilon'_i}^j)(\epsilon_i-{\epsilon'_i}^s)dx|\le O(\epsilon^2)$.

Next, we deal with $\int(\lambda_i-{\lambda'_i}^j)(\epsilon_i-{\epsilon'_i}^s)dx$. We calculate this integral for each of the intervals where $\lambda_i$ and ${\lambda'_i}^j$ do not bend nor does their difference change signs. Take one of such intervals $I$. On $I$, both $\lambda_i$ and ${\lambda'_i}^j$ do not bend, which means that $\epsilon_i-{\epsilon'_i}^s$ has slope $0$ except for the both ends. The width of these ends and the maximum absolute value of $\epsilon_i-{\epsilon'_i}^s$ at these ends can be bounded by $u\epsilon$ and $v\epsilon$, where $u,v$ are constants independent of $\epsilon$. The absolute value of $\lambda_i-{\lambda'_i}^j$ on $I$ is bounded by a constant $l$, which is independent of $\epsilon$. The value of $\epsilon_i-{\epsilon'_i}^s$ where its slope is $0$ can be written as $w\epsilon$, where $w$ is a constant independent of $\epsilon$. Therefore, we can write
\[
\int(\lambda_i-{\lambda'_i}^j)(\epsilon_i-{\epsilon'_i}^s)dx=w\epsilon\int(\lambda_i-{\lambda'_i}^j)dx+\delta
\]
where $\delta$ is the error term bounded by $uvl\epsilon^2=O(\epsilon^2)$. Note that the interval of the integral $I$ is not affected by $\epsilon$, which means that the first term on the right-hand side of the above equality is of the form $const.\epsilon$.
\end{proof}
From Prop.~\ref{prop1}, we get the following corollary.
\begin{corollary}
\label{cor2}
Suppose that we are given a persistence diagram $D$ and persistence diagrams ${D'}^j (j=1,\ldots,N)$. The distance between the generalized persistence landscape $G\lambda[\theta,y](D)$ and the average of the generalized persistence landscapes $G\lambda[\theta,y]({D'}^j)$ is partially differentiable with respect to $\theta$ and $y$.
\end{corollary}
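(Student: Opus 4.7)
The plan is to reduce Cor.~\ref{cor2} to a direct application of Prop.~\ref{prop1}, treating the perturbation parameter $\epsilon$ there as a small increment in either $\theta$ or in $y$ about a fixed base point $(\theta_0,y_0)$. I would describe the case of $\theta$ in detail; the case of $y$ is strictly analogous, since $T(-y)$ depends smoothly on $y$.

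First, I would verify that varying $\theta$ by $\epsilon$ induces on each point of the transformed diagram exactly the kind of shift demanded by the hypothesis of Prop.~\ref{prop1}. For any point $p_t\in D$, the map $\epsilon\mapsto (R(\pi/4-(\theta_0+\epsilon))\circ T(-y_0))(p_t)$ is analytic in $\epsilon$, so Taylor expansion at $\epsilon=0$ yields coordinate shifts of the form $\alpha_t\epsilon+O(\epsilon^2)$ in the first coordinate and $\beta_t\epsilon+O(\epsilon^2)$ in the second, where $(\alpha_t,\beta_t)$ is the $\theta$-derivative of the rotation-translation at $\theta_0$ evaluated at $p_t$. The same expansion applies uniformly to the points of each ${D'}^j$. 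The swap $S$ used for the below-diagonal part is linear, so for points pushed below the diagonal it simply relabels $(\alpha_t,\beta_t)$. This puts the data exactly in the setting of Prop.~\ref{prop1}.

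Second, I would handle the splitting $X=X_+\cup X_-$ underlying the definition of $G\lambda[\theta,y]$. For generic $(\theta_0,y_0)$, no transformed point of $D$ or of any ${D'}^j$ lies on the diagonal $\{(a,b):a=b\}$, so by continuity the assignment of points to the two halves is locally constant in $\epsilon$. The squared distance between $G\lambda[\theta_0+\epsilon,y_0](D)$ and the average of the $G\lambda[\theta_0+\epsilon,y_0]({D'}^j)$ then decomposes as a sum of integrals over the positive-indexed and the negative-indexed landscape components separately, and the proof of Prop.~\ref{prop1} applies verbatim to each summand to give differentiability in $\epsilon$. Differentiability then descends through the square root wherever the distance is nonzero at the base point.

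The main obstacle, and where I expect to spend the most care, is the non-generic situation in which some transformed point of $D$ or of a ${D'}^j$ lies exactly on the diagonal at $(\theta_0,y_0)$: then a small perturbation of $\theta$ can carry that point between $X_+$ and $X_-$, altering which summand it contributes to. My plan is to dispatch this by noting that a point exactly on the diagonal has persistence zero and therefore contributes to the landscape only a bump whose height and support width are each $O(\epsilon)$ under perturbation, so its net contribution to any of the integrals in the squared distance is $O(\epsilon^2)$ and is absorbed into the same error term used in the proof of Prop.~\ref{prop1}. Once this edge case is verified, the corollary follows by invoking Prop.~\ref{prop1} for each of the variables $\theta$ and $y$ in turn.
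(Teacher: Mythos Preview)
Your approach is correct and coincides with the paper's: the paper derives Cor.~\ref{cor2} directly from Prop.~\ref{prop1} without further argument, since Prop.~\ref{prop1} is already stated for the generalized landscape $G\lambda[\pi/4,0]$ (with both positive and negative indices) and rotation/translation depend smoothly on $\theta$ and $y$, so one simply takes $R(\pi/4-\theta_0)\circ T(-y_0)(D)$ as the base diagram and the incremental rotation or translation as the point motion. Your treatment of points landing on the diagonal and of the square root at $d(0)=0$ goes beyond what the paper spells out; these edge cases are not discussed there, so your added care is a genuine refinement rather than a deviation.
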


\section{Conclusion}
In this paper, we constructed the persistent homological definitions of external and internal structures for discrete branch-like structures. Based on this definition, we developed a method to detect external and internal structures in the images. We also established mathematical theories explaining the behavior of external and internal structures with respect to the points on the convex hull. We showed that our method can detect external and internal structures in a way that can be used for numerical analysis. As examples, we counted the number of external/internal structures and also conducted an internal structure analysis with persistence landscape. We showed that our method can be applied to a wide range of branch structures that appear in biology. We also explained the advantages of our method over other methods using TDA. For future applications, we constructed the generalized persistence landscape.

With these tools, we can develop various quantitative measures of external/internal structures and search for the ones with biological significance in specific contexts in future works. As for applications, we can use our method for various ``branch-like'' structures to objectively analyze and compare them.

\section*{Statements and Declarations}
\subsection*{Data Availability}
The datasets analyzed during the current study are available from the corresponding author upon reasonable request.
\subsection*{Code Availability}
The program code and model images can be found in the following link. \url{https://drive.google.com/drive/folders/1sjPd2z\_63iZEEInYAETRZKqUualA3plt?usp=sharing}
\subsection*{Competing Interests}
None.

\section*{Author contributions statement}
H.O., M.K., and H.K. conceived the research idea. H.O. developed the lymph vessel analysis method based on persistent homology, generalized persistence landscape, and their theoretical backgrounds and drafted the manuscript. M.K. took images of lymph vessels. Y.N. proofread the original manuscript, fixed some ambiguous expressions, and introduced several symbols to make the explanations clear. H.O., M.K., and H.K. discussed the biological aspects of the method. H.O. and Y.N. discussed the mathematical aspects of the method. H.K. supervised the experimental results.
All authors reviewed the manuscript.


\end{document}